\newtheorem{example}{Example}[section]
\newtheorem{theorem}{Theorem}[section]
\newtheorem{definition}{Definition}[section]
\newtheorem{lemma}{Lemma}[section]
\newenvironment{proof}[1][Proof]{\noindent\textbf{#1.} }{\ \rule{0.5em}{0.5em}}
\makeatletter \@addtoreset{equation}{section}
\renewcommand{\theequation}{\thesection.\@arabic\c@equation}
\renewcommand{\thefigure}{\thesection.\@arabic\c@figure}
\renewcommand{\thetable}{\thesection.\@arabic\c@table}
\begin{document}

\title{Convergence in the Boundary Layer for Nonhomogeneous Linear
Singularly Perturbed Systems\thanks{%
This work was supported by the Harbin Institute of Technology Science
Foundation under grant HITC200712.}}
\author{Zhibin Yan \\
{\small Center for Control and Guidance, Harbin Institute of Technology,
Harbin, 150001, China}\\
{\small \ E-mail: zbyan@hit.edu.cn}}
\date{}
\maketitle

\begin{abstract}
Convergence of the solutions of nonhomogeneous linear singularly perturbed
systems to that of the corresponding reduced singular system on the
half-line [0, $\infty $) is considered. To include the situation on a
neighborhood of initial instant, a boundary layer,\ a distributional
approach to convergence is adopted. An explicit analytical expression for
the limit as a distribution is proved.

\textbf{\noindent Keywords}. singular system, inconsistent initial
condition, singular perturbation, distribution theory

\textbf{MSC (2000):} 34A09, 34A30
\end{abstract}

\section{Introduction}

A rational motivation to study singular linear system, 
\begin{equation}
E\dot{x}=Ax+Bu  \label{SimplySys}
\end{equation}%
with \emph{singular} matrix $E,$ is that it is an evident simplification of
the singularly perturbed systems%
\begin{equation}
E(\epsilon )\dot{x}=Ax+Bu  \label{EpsilonSys}
\end{equation}%
for a \textquotedblleft small\textquotedblright\ parameter $\epsilon $ (may
be of vector form), where $E(\epsilon )\ $is \emph{nonsingular} and tends to 
$E\ $as $\epsilon \rightarrow 0$. The system (\ref{EpsilonSys}) arises
naturally from, for example, coupling subsystems with \textquotedblleft
slowly\textquotedblright\ and \textquotedblleft fastly\textquotedblright\
varying states respectively, optimal linear-quadratic regulator with cheap
control, etc. For detail, see \cite{Verghese1981}--\cite{Cobb2006}. For a
specific system analysis or synthesis problem, the effectiveness of the
above simplification relys on \textquotedblleft approximate
extent\textquotedblright\ between the solution to the problem for (\ref%
{SimplySys}) and that for (\ref{EpsilonSys}). Partially for characterizing
\textquotedblleft approximate extent\textquotedblright\ in the singular
perturbation analysis, some interesting topologies are introduced. See, for
example, \cite{Cobb2006}, \cite{Hinrichsen1997} and the references therein.

In this paper, we are interested in the following singularly perturbed
initial value problem%
\begin{equation}
\begin{array}{rll}
N(\epsilon )\dot{x}(t) & = & x(t)+f(t),\ t\geq 0 \\ 
x(0) & = & x_{0},%
\end{array}
\label{eqPerturbation}
\end{equation}%
and the corresponding reduced one%
\begin{equation}
\begin{array}{rll}
N\dot{x}(t) & = & x(t)+f(t),\ t\geq 0 \\ 
x(0) & = & x_{0},%
\end{array}
\label{eqdiffer}
\end{equation}%
Here $N(\epsilon )\in {\mathbb{R}}^{n\times n}$ is nonsingular for $\epsilon
\neq 0$ and tends to $N,$ a nilpotent matrix, as $\epsilon \rightarrow 0.$
The index of nilpotency of $N$ is denoted by $q,$ i.e.,%
\begin{equation}
q=\min \{k:k\geq 1,N^{k}=0\}.
\end{equation}%
The nonhomogeneous term $f$ is a $q-1$ times continuously differentiable
function mapping ${\mathbb{R}}_{+}=[0,+\infty )$ to ${\mathbb{R}}^{n}$.
Under a regularity assumption, the singular system (\ref{SimplySys}) can be
transformed into two subsystems through Weierstrass decomposition \cite%
{Gantmacher}. One has the form of the normal linear system which has trivial
relationship to the corresponding perturbed ones, and another is of the form
(\ref{eqdiffer}). For more detail of background, see \cite{Dai1989}. For
general initial conditions (\textquotedblleft inconsistent initial
conditions\textquotedblright ), the problem (\ref{eqdiffer}) has no solution
in the sense of classical differentiable function, and the corresponding
physical system exhibits impulsive behavior \cite{Verghese1981}. Thus some
generalized solutions are adopted for the problem (\ref{eqdiffer}). Recently 
\cite{Muller2005}--\cite{Yan2007}, an explicit distributional solution of (%
\ref{eqdiffer}),%
\begin{equation}
x(t)=-\tsum\nolimits_{i=0}^{q-1}N^{i}f^{(i)}(t)-\tsum\nolimits_{k=1}^{q-1}%
\delta ^{(k-1)}(t)N^{k}\left\{
x_{0}+\tsum\nolimits_{i=0}^{q-1}N^{i}f^{(i)}(0)\right\} ,\text{ }t\geq 0,
\label{Distributionsolution}
\end{equation}%
is obtained by Laplace transform. So in what sense and whether the solution
of (\ref{eqPerturbation}) given by%
\begin{equation}
x_{\epsilon }(t)=\exp \{(N(\epsilon
))^{-1}t\}x_{0}+\tint\nolimits_{0}^{t}\exp \{(N(\epsilon ))^{-1}(t-\tau
)\}(N(\epsilon ))^{-1}f(\tau )d\tau ,\text{ }t\geq 0,
\label{Perturbedsolution}
\end{equation}%
a classical function mapping ${\mathbb{R}}_{+}$ to ${\mathbb{R}}^{n},$ can
be approximated by the distribution (\ref{Distributionsolution}) becomes
interesting.

Works \cite{Francis1982} and \cite{Cobb1982} have the same concern, but they
only considered natural response (i.e., the solution for $f=0$). The forced
response (i.e., the solution for $x_{0}=0$) of (\ref{eqdiffer}) also
contains impulse term at initial instant according to (\ref%
{Distributionsolution}). So the convergence in a neighborhood of $t=0$, a
\textquotedblleft boundary layer\textquotedblright\ (region of nonuniform
convergence, see\ \cite{Francis1982}, \cite{O'Malley1979}), for the forced
response also appeal to a distributional approach. This motivates a
generalization to the results in \cite{Cobb1982} to include the
nonhomogeneous case. For other related works, see \cite{Kokotovic1976}, \cite%
{Kokotovic1980}, \cite{O'Malley1979}, \cite{Campbell1980} and the references
therein.

\section{\label{Section2}Notations and Definitions}

We review some notations and definitions in distribution theory \cite%
{Gelfand1965}. Let $\mathcal{C}_{C}^{\infty }(\mathbb{R},\mathbb{R}^{n})$ be
the space of infinitely differentiable functions from $\mathbb{R}$ to $%
\mathbb{R}^{n}$ with compact support. There is a topology on it \cite%
{Gelfand1965}, and then the distribution space is defined as the dual space $%
\mathcal{C}_{C}^{\infty }(\mathbb{R},\mathbb{R}^{n})^{\prime }$. So a
distribution $w\in \mathcal{C}_{C}^{\infty }(\mathbb{R},\mathbb{R}%
^{n})^{\prime }$ is a linear continuous functional on $\mathcal{C}%
_{C}^{\infty }(\mathbb{R},\mathbb{R}^{n}).$ The value, a real number, of $w$
on $\lambda \in \mathcal{C}_{C}^{\infty }(\mathbb{R},\mathbb{R}^{n})$ will
be denoted by $\left\langle w,\ \lambda \right\rangle .$ The Dirac delta
distribution $\delta \in \mathcal{C}_{C}^{\infty }(\mathbb{R},\mathbb{R}%
)^{\prime }$ is defined by $\left\langle \delta ,\ \lambda \right\rangle
=\lambda (0)$ for $\forall \lambda \in \mathcal{C}_{C}^{\infty }(\mathbb{R},%
\mathbb{R}).$ For any distribution $w\in \mathcal{C}_{C}^{\infty }(\mathbb{R}%
,\mathbb{R})^{\prime },$\ its $k$-th order distributional derivative $%
\mathcal{D}_{\mathrm{d}}^{(k)}w\in \mathcal{C}_{C}^{\infty }(\mathbb{R},%
\mathbb{R})^{\prime }$ is defined by%
\begin{equation}
\left\langle \mathcal{D}_{\mathrm{d}}^{(k)}w,\ \lambda \right\rangle
=(-1)^{k}\left\langle w,\ \lambda ^{(k)}\right\rangle  \label{distribudaoshu}
\end{equation}%
for $\forall \lambda \in \mathcal{C}_{C}^{\infty }(\mathbb{R},\mathbb{R}),$
where $\lambda ^{(k)}$ denotes the $k$-th order usual derivative. Let $%
\mathcal{L}_{\mathrm{loc}}(\mathbb{R},\mathbb{R}^{n})$ denote the set of all
locally Lebesque integrable functions from $\mathbb{R}$ to $\mathbb{R}^{n}.$
The embedding map $\mathcal{E}:\mathcal{L}_{\mathrm{loc}}(\mathbb{R},\mathbb{%
R}^{n})$ $\rightarrow \mathcal{C}_{C}^{\infty }(\mathbb{R},\mathbb{R}%
^{n})^{\prime }$ is defined by $\left\langle \mathcal{E}z,\ \lambda
\right\rangle =\tint\nolimits_{-\infty }^{+\infty }z(t)^{\mathrm{T}}\lambda
(t)dt$ for $\forall z\in \mathcal{L}_{\mathrm{loc}}(\mathbb{R},\mathbb{R}%
^{n})$ and $\forall \lambda \in \mathcal{C}_{C}^{\infty }(\mathbb{R},\mathbb{%
R}^{n}).$ Here $z(t)^{\mathrm{T}}$ represents the transpose of $z(t),$ and
the integral is in the sense of Lebesque. We do not distinguish $z$ and $%
\mathcal{E}z$ in following.\ Lastly, let $\mathcal{C}^{k}({\mathbb{R}}_{+},{%
\mathbb{R}}^{n})$ denote the set of all $k$-times continuously
differentiable functions from ${\mathbb{R}}_{+}$ to ${\mathbb{R}}^{n}$,
which can be seen as a subset of $\mathcal{L}_{\mathrm{loc}}(\mathbb{R},%
\mathbb{R}^{n})$ naturally.

Now we cite the definition of convergence of distribution sequence \cite%
{Gelfand1965}.

\begin{definition}
\label{DefOne}Given sequence $\{z_{i}\}_{i=1}^{\infty }\subset \mathcal{C}%
_{C}^{\infty }(\mathbb{R},\mathbb{R}^{n})^{\prime }$ and $z\in \mathcal{C}%
_{C}^{\infty }(\mathbb{R},\mathbb{R}^{n})^{\prime },$\ then $%
\{z_{i}\}_{i=1}^{\infty }$ is said to converge to $z\ $in $\mathcal{C}%
_{C}^{\infty }(\mathbb{R},\mathbb{R}^{n})^{\prime },$ denoted by $%
\lim\nolimits_{i\rightarrow \infty }z_{i}=z,$ if for every $\lambda \in 
\mathcal{C}_{C}^{\infty }(\mathbb{R},\mathbb{R}^{n}),$%
\begin{equation*}
\lim\nolimits_{i\rightarrow \infty }\left\langle z_{i},\ \lambda
\right\rangle =\left\langle z,\ \lambda \right\rangle .
\end{equation*}
\end{definition}

For convenience and without loss of generality, we consider discrete
perturbations%
\begin{equation}
\begin{array}{rll}
N_{i}\dot{x}(t) & = & x(t)+f(t),\ t\geq 0 \\ 
x(0) & = & x_{0},%
\end{array}
\label{disPertur}
\end{equation}%
where $N_{i}$ is nonsingular, and%
\begin{equation}
\lim\nolimits_{i\rightarrow \infty }N_{i}=N.  \label{Niandxi0}
\end{equation}%
The solution of (\ref{disPertur}) is%
\begin{equation}
x_{i}(t)=\exp \{N_{i}{}^{-1}t\}x_{0}+\tint\nolimits_{0}^{t}\exp
\{N_{i}{}^{-1}(t-\tau )\}N_{i}{}^{-1}f(\tau )d\tau ,\text{ }t\geq 0.
\label{discretsolution}
\end{equation}%
Then we need to explore, in the sense of Definition \ref{DefOne}, the
convergence of the solution sequence $\{x_{i}\}_{i=1}^{\infty }$ to the
solution (\ref{Distributionsolution}).

In following, except $\delta ^{(k)},$ the $k$-th order derivative notation $%
z^{(k)}$ will always be in the ordinary sense according to pointwise
differentiation. In the case $z\in \mathcal{C}^{k}({\mathbb{R}}_{+},{\mathbb{%
R}}^{n}),$ notation $z^{(k)}(0)$ is understood as that from right hand. We
always assume $f\in \mathcal{C}^{q-1}({\mathbb{R}}_{+},{\mathbb{R}}^{n})$ in
this paper, where $q$ is the nilpotency index of $N,$ to guarantee the
distributional solution having the expression (\ref{Distributionsolution}).

\section{Uniqueness}

For a perturbation manner given by $\{N_{i}\}_{i=1}^{\infty },$ the solution
sequence $\{x_{i}(t)\}_{i=1}^{\infty }$ may not converge. But we will prove
that if it does, then the limit must be the solution (\ref%
{Distributionsolution}) of the reduced system (\ref{eqdiffer}), not
dependent of the perturbation manner. This generalizes Theorem 2 in \cite%
{Cobb1982}.

\begin{lemma}
\cite[p.\thinspace 21]{Gelfand1965}\label{Lemma1}Let $z\in \mathcal{C}^{k}({%
\mathbb{R}}_{+},{\mathbb{R}}^{n}).$ Then we have%
\begin{equation}
\mathcal{D}_{\mathrm{d}}^{(k)}z=z^{(k)}+\tsum\nolimits_{j=0}^{k-1}\delta
^{(j)}z^{(k-1-j)}(0).  \label{daoshuguanxi}
\end{equation}
\end{lemma}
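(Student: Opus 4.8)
The plan is to prove the identity $\mathcal{D}_{\mathrm{d}}^{(k)}z=z^{(k)}+\sum_{j=0}^{k-1}\delta^{(j)}z^{(k-1-j)}(0)$ by induction on $k$, where on the left-hand side $z$ is viewed via the embedding $\mathcal{E}$ as a distribution and on the right $z^{(k)}\in\mathcal{C}^0$ is again viewed as a distribution. The base case $k=0$ is the triviality $\mathcal{D}_{\mathrm{d}}^{(0)}z=z$ (empty sum). For $k=1$, I would test against an arbitrary $\lambda\in\mathcal{C}_C^\infty(\mathbb{R},\mathbb{R}^n)$: by definition \eqref{distribudaoshu}, $\langle\mathcal{D}_{\mathrm{d}}^{(1)}z,\lambda\rangle=-\langle z,\lambda'\rangle=-\int_{-\infty}^{+\infty}z(t)^{\mathrm{T}}\lambda'(t)\,dt$. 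Since $z$, as an element of $\mathcal{L}_{\mathrm{loc}}$ obtained by embedding a function in $\mathcal{C}^1(\mathbb{R}_+,\mathbb{R}^n)$, vanishes for $t<0$, this integral is $-\int_0^{\infty}z(t)^{\mathrm{T}}\lambda'(t)\,dt$, and integration by parts on $[0,\infty)$ — legitimate because $\lambda$ has compact support and $z$ is $\mathcal{C}^1$ on $[0,\infty)$ — yields $z(0)^{\mathrm{T}}\lambda(0)+\int_0^\infty (z^{(1)}(t))^{\mathrm{T}}\lambda(t)\,dt=\langle\delta\, z(0),\lambda\rangle+\langle z^{(1)},\lambda\rangle$, which is \eqref{daoshuguanxi} for $k=1$.

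For the inductive step, assume the formula holds for some $k\geq 1$ with $z\in\mathcal{C}^{k+1}(\mathbb{R}_+,\mathbb{R}^n)$ (so in particular $z\in\mathcal{C}^k$). Then $\mathcal{D}_{\mathrm{d}}^{(k+1)}z=\mathcal{D}_{\mathrm{d}}^{(1)}\bigl(\mathcal{D}_{\mathrm{d}}^{(k)}z\bigr)=\mathcal{D}_{\mathrm{d}}^{(1)}\bigl(z^{(k)}+\sum_{j=0}^{k-1}\delta^{(j)}z^{(k-1-j)}(0)\bigr)$. The distributional first derivative is linear, so I treat the two pieces separately. Applying the $k=1$ case to the function $z^{(k)}\in\mathcal{C}^1(\mathbb{R}_+,\mathbb{R}^n)$ gives $\mathcal{D}_{\mathrm{d}}^{(1)}z^{(k)}=z^{(k+1)}+\delta\, z^{(k)}(0)$. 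For the delta terms, I use that $\mathcal{D}_{\mathrm{d}}^{(1)}(\delta^{(j)}c)=\delta^{(j+1)}c$ for a constant vector $c$, which is immediate from \eqref{distribudaoshu}. Reindexing $\sum_{j=0}^{k-1}\delta^{(j+1)}z^{(k-1-j)}(0)=\sum_{j'=1}^{k}\delta^{(j')}z^{(k-j')}(0)$ and combining with the $\delta\, z^{(k)}(0)$ term (which is the $j'=0$ term) and the $z^{(k+1)}$ term produces exactly $z^{(k+1)}+\sum_{j=0}^{k}\delta^{(j)}z^{(k-j)}(0)$, completing the induction.

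The only genuinely delicate point is the integration by parts in the $k=1$ case: one must be sure that the boundary contribution arises \emph{only} at $t=0$ and not at $t=+\infty$, which is guaranteed by $\lambda$ having compact support, and that $z$ as a distribution is the embedding of the function extended by zero to $t<0$, so that the lower limit of integration is genuinely $0$ and contributes the term $z(0)^{\mathrm{T}}\lambda(0)$. Everything else is bookkeeping: linearity of $\mathcal{D}_{\mathrm{d}}^{(1)}$, the elementary rule $\mathcal{D}_{\mathrm{d}}^{(1)}\delta^{(j)}=\delta^{(j+1)}$, and a shift of summation index. Since the statement is quoted from \cite[p.\,21]{Gelfand1965}, this short induction is really all that is needed.
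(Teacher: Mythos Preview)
Your induction argument is correct and is exactly the standard proof one finds in Gelfand--Shilov: the $k=1$ case is a one-line integration by parts (using that $z$ is extended by zero on $(-\infty,0)$ and that $\lambda$ has compact support), and the inductive step is pure bookkeeping with the rule $\mathcal{D}_{\mathrm{d}}\delta^{(j)}=\delta^{(j+1)}$. The paper itself does not supply a proof of this lemma; it simply quotes the result from \cite[p.\,21]{Gelfand1965}, so there is nothing further to compare.
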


Note that, according to the convention in Section \ref{Section2}, the
precise meaning of (\ref{daoshuguanxi}) is%
\begin{equation*}
\mathcal{D}_{\mathrm{d}}^{(k)}\mathcal{E(}z)=\mathcal{E(}z^{(k)})+\tsum%
\nolimits_{j=0}^{k-1}\delta ^{(j)}z^{(k-1-j)}(0).
\end{equation*}

\begin{lemma}
\label{Lemmadaoshu}$x_{i}(t)\in \mathcal{C}^{q}({\mathbb{R}}_{+},{\mathbb{R}}%
^{n})$ and for $m=1,2,\ldots ,q,$%
\begin{equation}
x_{i}^{(m)}(t)=N_{i}^{-m}x_{i}(t)+\tsum%
\nolimits_{l=1}^{m}N_{i}^{-l}f^{(m-l)}(t),\text{ }t\geq 0.
\label{casefor(m)}
\end{equation}
\end{lemma}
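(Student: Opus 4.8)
The plan is to establish the claim by induction on $m$, starting from the explicit formula (\ref{discretsolution}) for $x_i$. First I would observe that $x_i$ is a classical solution of (\ref{disPertur}), so directly from the differential equation we have $\dot{x}_i(t) = N_i^{-1}x_i(t) + N_i^{-1}f(t)$ for $t\geq 0$, which is exactly (\ref{casefor(m)}) for $m=1$. Since $f\in\mathcal{C}^{q-1}({\mathbb{R}}_+,{\mathbb{R}}^n)$ and $x_i$ is continuous, the right-hand side of this identity is itself $\mathcal{C}^{1}$, hence $x_i\in\mathcal{C}^2$; iterating this bootstrap argument $q-1$ times shows $x_i\in\mathcal{C}^q({\mathbb{R}}_+,{\mathbb{R}}^n)$, which handles the regularity assertion.

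For the inductive step, assume (\ref{casefor(m)}) holds for some $m$ with $1\le m\le q-1$. Differentiating both sides once more (legitimate by the regularity just established, as long as $m\le q-1$ so that $f^{(m-l)}$ for $l\ge 1$ is still differentiable, i.e. $f^{(m-1)}$ exists and is $\mathcal{C}^1$ when $m-1\le q-2$), I would compute
\begin{equation*}
x_i^{(m+1)}(t) = N_i^{-m}\dot{x}_i(t) + \tsum\nolimits_{l=1}^{m}N_i^{-l}f^{(m+1-l)}(t).
\end{equation*}
Substituting $\dot{x}_i(t) = N_i^{-1}x_i(t) + N_i^{-1}f(t)$ into the first term gives $N_i^{-(m+1)}x_i(t) + N_i^{-(m+1)}f(t)$, and the term $N_i^{-(m+1)}f(t)$ is precisely the $l=m+1$ summand, so combining it with the remaining sum $\tsum_{l=1}^{m}N_i^{-l}f^{(m+1-l)}(t)$ yields $\tsum_{l=1}^{m+1}N_i^{-l}f^{(m+1-l)}(t)$. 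This is exactly (\ref{casefor(m)}) with $m$ replaced by $m+1$, closing the induction.

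The only mildly delicate point — the ``main obstacle,'' such as it is — is bookkeeping the differentiability budget: each application of the identity to pass from level $m$ to $m+1$ consumes one derivative of $f$, and since $f$ is only assumed $\mathcal{C}^{q-1}$, one must check that the induction can be carried all the way to $m=q$ without demanding more smoothness than available. Concretely, at level $m$ the highest derivative of $f$ appearing is $f^{(m-1)}$, which requires $m-1\le q-1$, i.e. $m\le q$; so the formula is valid exactly up to $m=q$ and no further, consistent with the statement. Everything else is a routine differentiation and reindexing of a finite sum, so no genuine difficulty arises. I would close by noting the identity holds for all $i$ uniformly in form, which is what later convergence arguments will exploit.
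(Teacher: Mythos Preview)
Your proposal is correct and follows essentially the same inductive route as the paper: establish the $m=1$ case, then differentiate (\ref{casefor(m)}) and substitute the $m=1$ identity to close the induction. The only cosmetic difference is that the paper obtains the base case by directly differentiating the explicit formula (\ref{discretsolution}), whereas you read it off from the ODE (\ref{disPertur}); your added bootstrap remark for the $\mathcal{C}^{q}$ regularity is a welcome clarification that the paper leaves implicit.
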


\begin{proof}
Firstly, we prove the case $m=1.$%
\begin{eqnarray}
x_{i}^{(1)}(t) &=&N_{i}^{-l}e^{N_{i}^{-1}t}x_{0}+\left(
e^{N_{i}^{-1}t}\tint\nolimits_{0}^{t}e^{-N_{i}^{-1}\tau }N_{i}^{-1}f(\tau
)d\tau \right) ^{\prime }  \notag \\
&=&N_{i}^{-l}\left(
e^{N_{i}^{-1}t}x_{0}+e^{N_{i}^{-1}t}\tint\nolimits_{0}^{t}e^{-N_{i}^{-1}\tau
}N_{i}^{-1}f(\tau )d\tau \right)
+e^{N_{i}^{-1}t}e^{-N_{i}^{-1}t}N_{i}^{-1}f(t)  \notag \\
&=&N_{i}^{-l}x_{i}(t)+N_{i}^{-1}f(t).  \label{Caseforone}
\end{eqnarray}

Secondly, supposing that the case $m$ holds, we prove the case $m+1.$
Differentiating two sides of (\ref{casefor(m)}) gives%
\begin{equation}
x_{i}^{(m+1)}(t)=N_{i}^{-m}x_{i}^{(1)}(t)+\tsum%
\nolimits_{l=1}^{m}N_{i}^{-l}f^{(m+1-l)}(t).  \label{casekplus1}
\end{equation}%
Substituting (\ref{Caseforone}) in (\ref{casekplus1}) gives the result
immediately.
\end{proof}

Combining Lemmas 1 and 2, we have

\begin{lemma}
\label{kjiedaoshu}For $k=1,2,\ldots ,q,$ we have%
\begin{equation}
\mathcal{D}_{\mathrm{d}}^{(k)}x_{i}=N_{i}^{-k}x_{i}+\tsum%
\nolimits_{l=1}^{k}N_{i}^{-l}f^{(k-l)}+\tsum\nolimits_{j=0}^{k-1}\delta
^{(j)}\left(
N_{i}^{-(k-1-j)}x_{0}+\tsum\nolimits_{l=1}^{k-1-j}N_{i}^{-l}f^{(k-1-j-l)}(0)%
\right) .  \label{kjiefenbudaoshu}
\end{equation}
\end{lemma}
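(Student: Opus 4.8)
The plan is to combine Lemma \ref{Lemma1} and Lemma \ref{Lemmadaoshu} directly, since the statement is essentially obtained by feeding the pointwise formula \eqref{casefor(m)} into the distributional differentiation identity \eqref{daoshuguanxi}. First I would observe that by Lemma \ref{Lemmadaoshu} we have $x_i\in\mathcal{C}^q({\mathbb{R}}_+,{\mathbb{R}}^n)$, so for each $k=1,2,\ldots,q$ Lemma \ref{Lemma1} applies and yields
\begin{equation*}
\mathcal{D}_{\mathrm{d}}^{(k)}x_{i}=x_{i}^{(k)}+\tsum\nolimits_{j=0}^{k-1}\delta^{(j)}x_{i}^{(k-1-j)}(0).
\end{equation*}

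Next I would substitute the explicit expression \eqref{casefor(m)} for the ordinary derivative $x_i^{(k)}(t)$ into the first term, giving $N_i^{-k}x_i+\tsum_{l=1}^k N_i^{-l}f^{(k-l)}$. For the remaining sum over $j$, I would evaluate \eqref{casefor(m)} at $t=0$ with $m=k-1-j$: since $x_i(0)=x_0$, this gives $x_i^{(k-1-j)}(0)=N_i^{-(k-1-j)}x_0+\tsum_{l=1}^{k-1-j}N_i^{-l}f^{(k-1-j-l)}(0)$. Here I would note the natural boundary convention that when $k-1-j=0$ (i.e.\ $j=k-1$) the term is just $x_i(0)=x_0$, which matches the convention that an empty sum vanishes and $N_i^{0}=I$. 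Plugging these two substitutions into the displayed identity above reproduces \eqref{kjiefenbudaoshu} term by term.

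There is no real obstacle here; the statement is purely a bookkeeping consequence of the two preceding lemmas. The only point requiring a little care is the index shift in the inner sums: one must check that $k-1-j-l$ ranges correctly as $j$ runs from $0$ to $k-1$ and $l$ from $1$ to $k-1-j$, and that the $j=k-1$ summand degenerates cleanly to $\delta^{(k-1)}x_0$. Once the indices are verified, the proof is complete.
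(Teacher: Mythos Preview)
Your proposal is correct and follows exactly the approach the paper takes: the paper simply states the lemma as an immediate consequence of combining Lemmas \ref{Lemma1} and \ref{Lemmadaoshu}, without writing out any further details. Your substitution of \eqref{casefor(m)} into \eqref{daoshuguanxi}, together with the observation that $x_i(0)=x_0$ and the empty-sum convention at $j=k-1$, is precisely the intended bookkeeping.
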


\begin{lemma}
\cite[p.28]{Gelfand1965}\label{daoshushoulian}Let $\{z_{i}\}_{i=1}^{\infty }$
$\subset $ $\mathcal{C}_{C}^{\infty }(\mathbb{R},\mathbb{R}^{n})^{\prime }$
and $z$ $\in $ $\mathcal{C}_{C}^{\infty }(\mathbb{R},\mathbb{R}^{n})^{\prime
}.$ If $\lim_{i\rightarrow \infty }z_{i}$ $=$ $z,$ then for every $k\geq 1,$ 
\begin{equation*}
\lim_{i\rightarrow \infty }\mathcal{D}_{\mathrm{d}}^{(k)}z_{i}=\mathcal{D}_{%
\mathrm{d}}^{(k)}z.
\end{equation*}
\end{lemma}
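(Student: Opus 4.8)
The plan is to reduce everything to the two defining relations already recorded in Section~\ref{Section2}: the definition (\ref{distribudaoshu}) of the distributional derivative and the definition of convergence in $\mathcal{C}_{C}^{\infty}(\mathbb{R},\mathbb{R}^{n})^{\prime }$ (Definition~\ref{DefOne}). Since convergence of a distribution sequence is \emph{tested} one test function at a time, it suffices to fix an arbitrary $\lambda \in \mathcal{C}_{C}^{\infty }(\mathbb{R},\mathbb{R}^{n})$ and an arbitrary integer $k\geq 1$ and to show that $\left\langle \mathcal{D}_{\mathrm{d}}^{(k)}z_{i},\ \lambda \right\rangle$ converges to $\left\langle \mathcal{D}_{\mathrm{d}}^{(k)}z,\ \lambda \right\rangle$ as $i\to\infty$.

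First I would observe that $\lambda ^{(k)}$, the ordinary $k$-th derivative of $\lambda$, is again an element of $\mathcal{C}_{C}^{\infty }(\mathbb{R},\mathbb{R}^{n})$: differentiation destroys neither smoothness nor compactness of the support, since $\operatorname{supp}\lambda ^{(k)}\subset \operatorname{supp}\lambda$. Hence $\lambda ^{(k)}$ is itself a legitimate test function, and the hypothesis $\lim_{i\to\infty }z_{i}=z$ applied to it gives $\lim_{i\to\infty }\left\langle z_{i},\ \lambda ^{(k)}\right\rangle =\left\langle z,\ \lambda ^{(k)}\right\rangle$.

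Then I would simply reassemble: by (\ref{distribudaoshu}),
\begin{equation*}
\left\langle \mathcal{D}_{\mathrm{d}}^{(k)}z_{i},\ \lambda \right\rangle =(-1)^{k}\left\langle z_{i},\ \lambda ^{(k)}\right\rangle \longrightarrow (-1)^{k}\left\langle z,\ \lambda ^{(k)}\right\rangle =\left\langle \mathcal{D}_{\mathrm{d}}^{(k)}z,\ \lambda \right\rangle ,
\end{equation*}
and since $\lambda$ was arbitrary, Definition~\ref{DefOne} yields $\lim_{i\to\infty }\mathcal{D}_{\mathrm{d}}^{(k)}z_{i}=\mathcal{D}_{\mathrm{d}}^{(k)}z$. (Alternatively, one proves the case $k=1$ and iterates, using $\mathcal{D}_{\mathrm{d}}^{(k)}=\mathcal{D}_{\mathrm{d}}^{(1)}\circ \cdots \circ \mathcal{D}_{\mathrm{d}}^{(1)}$; the one-step argument is the same computation with $k=1$.) There is essentially no obstacle here: the only point deserving a word of justification is the stability of the test-function space under differentiation, and the argument uses nothing about the $z_{i}$ beyond the bare hypothesis — in particular no uniformity in $i$, because convergence in $\mathcal{C}_{C}^{\infty }(\mathbb{R},\mathbb{R}^{n})^{\prime }$ is defined pointwise on test functions. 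The genuine work of the paper lies not in this lemma but in later verifying, for the specific sequence (\ref{discretsolution}), that its hypothesis can be met and in identifying the resulting limit.
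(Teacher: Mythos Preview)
Your argument is correct and is exactly the standard proof of continuity of distributional differentiation. The paper does not supply its own proof of this lemma at all; it merely cites the result from \cite[p.~28]{Gelfand1965}, so there is nothing further to compare.
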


\begin{theorem}
If $\{x_{i}\}_{i=1}^{\infty }$ converges, then $\lim_{i\rightarrow \infty
}x_{i}=x$.
\end{theorem}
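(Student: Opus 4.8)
The plan is to exploit the two linear relations already established: the distributional identity \eqref{kjiefenbudaoshu} for the perturbed solutions and the structural fact that $N$ is nilpotent of index $q$. Assume $\lim_{i\to\infty} x_i = y$ for some distribution $y$; I must show $y$ equals the right-hand side of \eqref{Distributionsolution}. First I would multiply \eqref{kjiefenbudaoshu} on the left by $N_i^{k}$, which turns the leading term $N_i^{-k}x_i$ into $x_i$ and produces
\begin{equation*}
N_i^{k}\mathcal{D}_{\mathrm{d}}^{(k)}x_i = x_i + \tsum\nolimits_{l=1}^{k}N_i^{k-l}f^{(k-l)} + \tsum\nolimits_{j=0}^{k-1}\delta^{(j)}\left( N_i^{j+1}x_0 + \tsum\nolimits_{l=1}^{k-1-j}N_i^{k-l}f^{(k-1-j-l)}(0)\right).
\end{equation*}
Now take $k=q$. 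Since $N_i\to N$ and matrix multiplication is continuous, while $f^{(j)}$ and $f^{(j)}(0)$ are fixed, every coefficient $N_i^{m}$ with $m\ge q$ tends to $N^{m}=0$, and those with $0\le m<q$ tend to $N^{m}$. Combined with Lemma~\ref{daoshushoulian} (continuity of distributional differentiation under the limit), passing $i\to\infty$ in this identity would give $y$ explicitly in terms of $N$, $x_0$, $f$ — and I expect the terms to collapse exactly to \eqref{Distributionsolution} after reindexing.

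More carefully, here is the order of steps. (1) Write $y := \lim_i x_i$, which exists by hypothesis. (2) From Lemma~\ref{daoshushoulian}, $\mathcal{D}_{\mathrm{d}}^{(q)}x_i \to \mathcal{D}_{\mathrm{d}}^{(q)}y$; more generally each $\mathcal{D}_{\mathrm{d}}^{(k)}x_i\to\mathcal{D}_{\mathrm{d}}^{(k)}y$. (3) In \eqref{kjiefenbudaoshu} with $k=q$, left-multiply by $N_i^{q}$ so that the $x_i$-coefficient becomes the identity and the term $N_i^{-q}x_i$ becomes simply $x_i$; then isolate $x_i$ on one side:
\begin{equation*}
x_i = N_i^{q}\mathcal{D}_{\mathrm{d}}^{(q)}x_i - \tsum\nolimits_{l=1}^{q}N_i^{q-l}f^{(q-l)} - \tsum\nolimits_{j=0}^{q-1}\delta^{(j)}\left( N_i^{j+1}x_0 + \tsum\nolimits_{l=1}^{q-1-j}N_i^{q-l}f^{(q-1-j-l)}(0)\right).
\end{equation*}
(4) Let $i\to\infty$. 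The first term $N_i^{q}\mathcal{D}_{\mathrm{d}}^{(q)}x_i$: here $N_i^q\to N^q=0$, but $\mathcal{D}_{\mathrm{d}}^{(q)}x_i$ only converges (it need not be bounded in any operator sense), so one must argue that $N_i^q\mathcal{D}_{\mathrm{d}}^{(q)}x_i\to 0$. This is the one genuinely delicate point — see below. Granting it, the remaining terms converge coefficient-wise: $\sum_{l=1}^{q}N_i^{q-l}f^{(q-l)}\to\sum_{l=1}^{q}N^{q-l}f^{(q-l)} = \sum_{i=0}^{q-1}N^{i}f^{(i)}$, and similarly the bracketed delta-coefficients tend to $N^{j+1}x_0 + \sum_{l=1}^{q-1-j}N^{q-l}f^{(q-1-j-l)}(0)$, which after substituting $m=q-l$ is $N^{j+1}(x_0 + \sum_{m=0}^{q-1}N^{m}f^{(m)}(0))$ once one checks the terms with $m\ge q-j$ vanish because of the $N^{j+1}$ prefactor. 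Reindexing $k=j+1$ then reproduces exactly \eqref{Distributionsolution}, so $y=x$.

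The main obstacle is step (4), handling $N_i^{q}\mathcal{D}_{\mathrm{d}}^{(q)}x_i$: a product of a sequence tending to the zero matrix with a merely convergent (hence in particular not necessarily ``bounded'') sequence of distributions need not tend to zero in general. The resolution I would pursue is \emph{not} to treat the factors separately but to go back one level: apply the displayed identity with $k=q-1$ instead, left-multiplied by $N_i^{q-1}$, giving an expression for $x_i$ involving $N_i^{q-1}\mathcal{D}_{\mathrm{d}}^{(q-1)}x_i$; substitute $\mathcal{D}_{\mathrm{d}}^{(q-1)}x_i$ using the $k=1$ version of the identity applied to $\mathcal{D}_{\mathrm{d}}^{(q-1)}x_i$, or more directly use Lemma~\ref{kjiedaoshu} to rewrite $N_i^{q}\mathcal{D}_{\mathrm{d}}^{(q)}x_i = N_i^{q-1}\cdot N_i\mathcal{D}_{\mathrm{d}}^{(q)}x_i$ and note $N_i\mathcal{D}_{\mathrm{d}}^{(q)}x_i = \mathcal{D}_{\mathrm{d}}^{(q-1)}(N_i\mathcal{D}_{\mathrm{d}}^{(1)}x_i)$ — but $N_i\mathcal{D}_{\mathrm{d}}^{(1)}x_i = N_i x_i' + N_i\delta x_0$, and from \eqref{casefor(m)} with $m=1$, $N_i x_i'(t) = x_i(t) + f(t)$, so $N_i\mathcal{D}_{\mathrm{d}}^{(1)}x_i = x_i + f + \delta N_i x_0$. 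Iterating this substitution $q$ times peels off all the inverse powers and expresses $N_i^{q}\mathcal{D}_{\mathrm{d}}^{(q)}x_i$ purely as $x_i$ plus terms with nonnegative powers of $N_i$ acting on $f$-data and $\delta$-derivatives — all of which converge — and with a leading contribution $N_i^{q}\delta^{(q-1)}x_0 + \cdots$ whose top coefficient is $N_i^{q}\to 0$ cleanly. In fact this iteration, done from the start, \emph{is} the proof and avoids ever confronting the problematic product; it simply amounts to repeatedly using $N_i x_i' = x_i + f$ (a \emph{bounded} operation since $N_i\to N$) rather than $x_i' = N_i^{-1}x_i + N_i^{-1}f$ (an unbounded one). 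I would present it that way.
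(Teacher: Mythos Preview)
Your plan is exactly the paper's proof: set $k=q$ in \eqref{kjiefenbudaoshu}, left-multiply by $N_i^{q}$ to clear all negative powers of $N_i$, obtain what the paper records as \eqref{jiedekejiedaoshu}, and let $i\to\infty$ using $N_i\to N$, $N^q=0$, and Lemma~\ref{daoshushoulian}. Your reindexing at the end is also the paper's.

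Where you diverge is in flagging $N_i^{q}\mathcal{D}_{\mathrm{d}}^{(q)}x_i\to 0$ as ``genuinely delicate.'' The paper simply writes the limit as $N^{q}\lim_i\mathcal{D}_{\mathrm{d}}^{(q)}x_i$ without comment, and this is in fact justified. Your parenthetical ``hence in particular not necessarily bounded'' is the error: a weak-$\ast$ convergent sequence of distributions \emph{is} bounded (equicontinuous, by Banach--Steinhaus on the barrelled space $\mathcal{C}_C^\infty$). Even more elementarily, since matrices live in a finite-dimensional space one may write $N_i^{q}=\sum_{r,s}(N_i^{q})_{rs}E_{rs}$ and, for any test function $\lambda$,
\[
\bigl\langle N_i^{q}\mathcal{D}_{\mathrm{d}}^{(q)}x_i,\lambda\bigr\rangle
=\sum_{r,s}(N_i^{q})_{rs}\,\bigl\langle \mathcal{D}_{\mathrm{d}}^{(q)}x_i,E_{rs}^{\mathrm{T}}\lambda\bigr\rangle,
\]
a finite sum of products of two convergent real sequences, the first of which tends to $0$. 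So the product tends to $0$ termwise.

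Your proposed workaround does not avoid the issue. Iterating $N_i\mathcal{D}_{\mathrm{d}}x_i=x_i+f+\delta N_ix_0$ just re-derives \eqref{jiedekejiedaoshu}; the left-hand side $N_i^{q}\mathcal{D}_{\mathrm{d}}^{(q)}x_i$ is still there, and knowing that the right-hand side converges (to $y+\text{limit of the explicit terms}$) tells you only that the left-hand side converges, not that its limit is $0$. To pin down $y$ you must still argue the left-hand side vanishes in the limit, which is precisely the product question above --- and that question arises equally whether the matrices $N_i^{m}$ tend to $0$ or merely to $N^{m}$. The distinction you draw between using $N_ix_i'=x_i+f$ versus $x_i'=N_i^{-1}x_i+N_i^{-1}f$ concerns only how one \emph{arrives} at the identity, not how one passes to the limit in it.
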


\begin{proof}
Let $k=q,$ the index of nilpotency of $N,$ in (\ref{kjiefenbudaoshu}).
Multiplying two sides from left by $N_{i}^{q}$ gives%
\begin{eqnarray}
N_{i}^{q}\mathcal{D}_{\mathrm{d}}^{(q)}x_{i}
&=&x_{i}+\tsum\nolimits_{l=1}^{q}N_{i}^{q-l}f^{(q-l)}+\tsum%
\nolimits_{j=0}^{q-1}\delta ^{(j)}\left(
N_{i}^{q-(q-1-j)}x_{0}+\tsum%
\nolimits_{l=1}^{q-1-j}N_{i}^{q-l}f^{(q-1-j-l)}(0)\right)  \notag \\
&=&x_{i}+\tsum\nolimits_{l=0}^{q-1}N_{i}^{l}f^{(l)}+\tsum%
\nolimits_{j=0}^{q-1}\delta ^{(j)}N_{i}^{j+1}\left(
x_{0}+\tsum\nolimits_{m=0}^{q-2-j}N_{i}^{m}f^{(m)}(0)\right) .
\label{jiedekejiedaoshu}
\end{eqnarray}%
Letting $i\rightarrow \infty $ and noting that $N_{i}\rightarrow N,$ we
obtain%
\begin{equation*}
N^{q}\lim_{i\rightarrow \infty }\mathcal{D}_{\mathrm{d}}^{(q)}x_{i}=\lim_{i%
\rightarrow \infty
}x_{i}+\tsum\nolimits_{l=0}^{q-1}N^{l}f^{(l)}+\tsum\nolimits_{j=0}^{q-1}%
\delta ^{(j)}N^{j+1}\left(
x_{0}+\tsum\nolimits_{m=0}^{q-2-j}N^{m}f^{(m)}(0)\right)
\end{equation*}%
from Lemma \ref{daoshushoulian}. Noting that $N^{q}=0$ and $%
N^{j+1}\sum_{m=q-2-j+1}^{q-1}N^{m}=0,$ we have%
\begin{eqnarray*}
\lim_{i\rightarrow \infty }x_{i}
&=&-\tsum\nolimits_{l=0}^{q-1}N^{l}f^{(l)}-\tsum\nolimits_{j=0}^{q-2}\delta
^{(j)}N^{j+1}\left( x_{0}+\tsum\nolimits_{m=0}^{q-1}N^{m}f^{(m)}(0)\right) \\
&=&-\tsum\nolimits_{l=0}^{q-1}N^{l}f^{(l)}-\tsum\nolimits_{k=1}^{q-1}\delta
^{(k-1)}N^{k}\left( x_{0}+\tsum\nolimits_{m=0}^{q-1}N^{m}f^{(m)}(0)\right) .
\end{eqnarray*}%
This completes the proof.
\end{proof}

\section{Convergence}

In this section, we will give a condition on perturbation to guarantee
convergence. An example satisfying the condition shows the existence of
convergent perturbation. This gives a generalization to Theorem 1 in \cite%
{Cobb1982}.

\begin{lemma}
\label{LemmaA}If the number sequence $\{\tint\nolimits_{0}^{+\infty
}||N_{i}^{k}e^{N_{i}^{-1}t}||dt,$ $i=1,2,\ldots \}$ is bounded for some $%
k\geq 0,$ and $f\in \mathcal{C}^{q+k}({\mathbb{R}}_{+},{\mathbb{R}}^{n})\cap 
\mathcal{L}^{1}({\mathbb{R}}_{+},{\mathbb{R}}^{n})$ then $%
\{x_{i}\}_{i=1}^{\infty }$ converges.
\end{lemma}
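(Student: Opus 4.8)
The plan is to show that the sequence of distributions $\{x_i\}$ converges by testing against an arbitrary $\lambda\in\mathcal{C}_C^\infty(\mathbb{R},\mathbb{R}^n)$ and establishing that $\langle x_i,\lambda\rangle$ is a Cauchy sequence in $\mathbb{R}$ (or, better, converges to an explicit limit). Writing out $\langle x_i,\lambda\rangle=\int_0^\infty x_i(t)^{\mathrm T}\lambda(t)\,dt$ with $x_i$ given by (\ref{discretsolution}), the natural first move is to separate the natural-response part $e^{N_i^{-1}t}x_0$ from the forced-response part $\int_0^t e^{N_i^{-1}(t-\tau)}N_i^{-1}f(\tau)\,d\tau$ and handle each by integration by parts, so as to replace the badly-behaved factors $N_i^{-1}$ (which blow up as $i\to\infty$) by the well-behaved factors $N_i^{k}e^{N_i^{-1}t}$ controlled by the hypothesis.

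First I would treat the natural response. Using Lemma \ref{Lemmadaoshu} (or a direct computation) one has $\frac{d}{dt}\bigl(N_i^{\,j+1}e^{N_i^{-1}t}\bigr)=N_i^{\,j}e^{N_i^{-1}t}$, so repeated integration by parts against $\lambda$ moves derivatives onto $\lambda$ and raises the power of $N_i$: after $k$ steps the integrand involves $N_i^{k}e^{N_i^{-1}t}\lambda^{(k)}(t)$ plus boundary terms at $t=0$ of the form $N_i^{\,j+1}x_0\,\lambda^{(j)}(0)$ for $j=0,\dots,k-1$. The boundary terms converge to $N^{\,j+1}x_0\,\lambda^{(j)}(0)$ since $N_i\to N$, and the remaining integral is bounded in absolute value by $\bigl(\sup_t\|\lambda^{(k)}(t)\|\bigr)\int_0^\infty\|N_i^{k}e^{N_i^{-1}t}\|\,dt$, which is bounded uniformly in $i$ by hypothesis; combined with the fact that $N_i^{k}e^{N_i^{-1}t}\to 0$ pointwise in $t>0$ (because $N$ is nilpotent, heuristically $N_i^k e^{N_i^{-1}t}\to N^k\cdot 0$), dominated convergence forces this integral to $0$. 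Hence $\langle e^{N_i^{-1}\cdot}x_0,\lambda\rangle$ converges.

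Next I would treat the forced response by the same device: switch the order of integration (Fubini), perform the same $k$-fold integration by parts in the variable that carries the exponential, and again use the uniform bound on $\int_0^\infty\|N_i^{k}e^{N_i^{-1}t}\|\,dt$ together with $f\in\mathcal{C}^{q+k}\cap\mathcal{L}^1$ to control the derivatives $f^{(l)}$ that appear and to guarantee all integrals are finite. One more subtlety: after integration by parts, terms like $\int_0^\infty N_i^{l}f^{(l)}(t)\,\lambda(t)\,dt$ with $0\le l\le q-1$ appear, and these converge to $\int_0^\infty N^{l}f^{(l)}\lambda$ directly since $N_i^l\to N^l$; the genuinely singular leftover integral again carries a factor $N_i^{k}e^{N_i^{-1}t}$ and vanishes in the limit by the same dominated-convergence argument. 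Assembling the pieces shows $\langle x_i,\lambda\rangle$ converges for every $\lambda$, which is exactly Definition \ref{DefOne}; by the preceding Theorem the limit is necessarily $x$ in (\ref{Distributionsolution}).

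The main obstacle I anticipate is the justification that $N_i^{k}e^{N_i^{-1}t}\to 0$ pointwise for $t>0$ and that the passage to the limit inside the integral is legitimate — the matrices $N_i^{-1}$ are not uniformly bounded, so one cannot naively bound the exponential, and one must exploit the nilpotency of the limit $N$ together with the boundedness hypothesis on $\int_0^\infty\|N_i^{k}e^{N_i^{-1}t}\|\,dt$. A clean way around the pointwise-limit issue is to note that boundedness of $\int_0^\infty\|N_i^{k}e^{N_i^{-1}t}\|\,dt$ already implies, via the identity $N_i^{k}e^{N_i^{-1}t}=N_i^{k}+\int_0^t N_i^{k-1}e^{N_i^{-1}s}\,ds$ iterated down, enough decay to push the tail integrals to zero against a fixed compactly supported $\lambda$; making this rigorous (probably via an Arzelà–Ascoli or a direct $\varepsilon$-estimate splitting $[0,\infty)$ into $[0,\eta]$ and $[\eta,\infty)$) is where the real work lies.
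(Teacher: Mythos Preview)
Your integration-by-parts strategy is the right instinct, but as written it has a genuine gap at exactly the point you flag. After only $k$ integrations by parts the remainder carries $N_i^{k}e^{N_i^{-1}t}$, and the hypothesis gives you only a \emph{uniform $L^1$ bound} on this quantity, not a dominating function and not pointwise convergence to zero. So your appeal to dominated convergence is not justified: a uniformly bounded sequence of $L^1$ norms does not force the integrals against a fixed test function to tend to zero, and your heuristic ``$N_i^{k}e^{N_i^{-1}t}\to N^{k}\cdot 0$'' has no content (the exponential itself blows up). The workaround you sketch in the last paragraph (iterate the identity $N_i^{k}e^{N_i^{-1}t}=N_i^{k}+\int_0^t N_i^{k-1}e^{N_i^{-1}s}\,ds$, Arzel\`a--Ascoli, split the interval) does not obviously close the gap either.

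The missing idea---and this is what the paper does---is simply to go $q$ (in fact $q+1$) steps further. If you integrate by parts $q+1+k$ times instead of $k$ times, the remainder carries $N_i^{q+1+k}e^{N_i^{-1}t}$, and now
\[
\Bigl|\int_0^\infty (N_i^{q+1+k}e^{N_i^{-1}t}x_0)^{\mathrm T}\lambda^{(q+1+k)}(t)\,dt\Bigr|
\le \|N_i^{q}\|\cdot\|N_i\|\cdot\|x_0\|\cdot\sup_t\|\lambda^{(q+1+k)}\|\cdot\int_0^\infty\|N_i^{k}e^{N_i^{-1}t}\|\,dt\longrightarrow 0,
\]
because $\|N_i^{q}\|\to\|N^{q}\|=0$ while the last factor is bounded by hypothesis. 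No pointwise convergence, no dominating function needed. The paper packages this dually: it shows $N_i^{q+1+k}x_i\to 0$ in $\mathcal{C}_C^\infty(\mathbb R,\mathbb R^n)'$ (the natural part by Cobb's Lemma~1, the forced part by exactly the estimate above with Fubini), then applies Lemma~\ref{daoshushoulian} to get $N_i^{q+1+k}\mathcal{D}_{\mathrm d}^{(q+1+k)}x_i\to 0$, and finally reads off $\lim x_i$ from the identity (\ref{jiedekejiedaoshu}) written at order $q+1+k$ instead of $q$. That last identity is precisely where the assumption $f\in\mathcal{C}^{q+k}$ is consumed: Lemma~\ref{Lemmadaoshu} at order $q+1+k$ needs $f^{(q+k)}$ to exist. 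In your write-up you invoke $f\in\mathcal{C}^{q+k}$ but never actually use derivatives of $f$ beyond order $q-1$, which is another sign that your argument stops one idea short.
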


\begin{proof}
Under the boundedness assumption, the sequence $%
\{N_{i}^{q+1+k}e^{N_{i}^{-1}t}x_{0}\}_{i=1}^{\infty }$ converges to $0$ in
the sense of Definition \ref{DefOne} by Lemma 1 in \cite{Cobb1982}. Let $%
h\in \mathcal{C}_{C}^{\infty }(\mathbb{R},\mathbb{R}^{n})$ with $%
||h(t)||\leq C$ for $\forall t\in \mathbb{R}$. Since%
\begin{eqnarray*}
&&\left\vert \left\langle
N_{i}^{q+1+k}\tint\nolimits_{0}^{t}e^{N_{i}^{-1}(t-\tau )}N_{i}^{-1}f(\tau
)d\tau ,\ h\right\rangle \right\vert \\
&\leq &\tint\nolimits_{0}^{+\infty }||h(t)||\cdot \left(
\tint\nolimits_{0}^{t}||N_{i}^{q+k}e^{N_{i}^{-1}(t-\tau )}||\cdot ||f(\tau
)||d\tau \right) dt \\
&\leq &\tint\nolimits_{0}^{+\infty }||f(\tau )||\cdot \left(
\tint\nolimits_{\tau }^{+\infty }||N_{i}^{q+k}e^{N_{i}^{-1}(t-\tau )}||\cdot
||h(t)||dt\right) d\tau \\
&\leq &\tint\nolimits_{0}^{+\infty }||f(\tau )||d\tau \cdot
||N_{i}^{q}||C\tint\nolimits_{0}^{+\infty }||N_{i}^{k}e^{N_{i}^{-1}t}||dt \\
&\rightarrow &0
\end{eqnarray*}%
by the assumptions (note that $||N_{i}^{q}||\rightarrow ||N^{q}||=0$), the
sequence $\{N_{i}^{q+1+k}\tint\nolimits_{0}^{t}e^{N_{i}^{-1}(t-\tau
)}N_{i}^{-1}f(\tau )d\tau $ $:$ $i=1,2,$ $\ldots \}$ converges to $0$ in $%
\mathcal{C}_{C}^{\infty }(\mathbb{R},\mathbb{R}^{n})^{\prime }$ also. So we
have 
\begin{equation*}
N_{i}^{q+1+k}x_{i}(t)=N_{i}^{q+1+k}e^{N_{i}^{-1}t}x_{0}+N_{i}^{q+1+k}\tint%
\nolimits_{0}^{t}e^{N_{i}^{-1}(t-\tau )}N_{i}^{-1}f(\tau )d\tau ,\text{ }%
t\geq 0
\end{equation*}%
converges to $0$ in $\mathcal{C}_{C}^{\infty }(\mathbb{R},\mathbb{R}%
^{n})^{\prime }.\ $By Lemma \ref{daoshushoulian}, we have 
\begin{equation}
\lim_{i\rightarrow \infty }\mathcal{D}_{\mathrm{d}%
}^{(q+1+k)}(N_{i}^{q+1+k}x_{i})=\lim_{i\rightarrow \infty }N_{i}^{q+1+k}%
\mathcal{D}_{\mathrm{d}}^{(q+1+k)}x_{i}=0.  \label{proofeqone}
\end{equation}%
On the other hand, since $f$ $\in $ $\mathcal{C}^{q+k}({\mathbb{R}}_{+},{%
\mathbb{R}}^{n})$, we have%
\begin{eqnarray}
N_{i}^{q+1+k}\mathcal{D}_{\mathrm{d}}^{(q+1+k)}x_{i}
&=&x_{i}+\tsum\nolimits_{l=0}^{(q+1+k)-1}N_{i}^{l}f^{(l)}  \label{proofeqtwo}
\\
&&+\tsum\nolimits_{j=0}^{(q+1+k)-1}\delta ^{(j)}N_{i}^{j+1}\left(
x_{0}+\tsum\nolimits_{m=0}^{(q+1+k)-2-j}N_{i}^{m}f^{(m)}(0)\right)  \notag
\end{eqnarray}%
like (\ref{jiedekejiedaoshu}). From (\ref{proofeqone}) and (\ref{proofeqtwo}%
) we see the existence of $\lim_{i\rightarrow \infty }x_{i}$ and 
\begin{equation*}
\lim_{i\rightarrow \infty
}x_{i}=-\tsum\nolimits_{l=0}^{(q+1+k)-1}N^{l}f^{(l)}-\tsum%
\nolimits_{j=0}^{(q+1+k)-1}\delta ^{(j)}N^{j+1}\left(
x_{0}+\tsum\nolimits_{m=0}^{(q+1+k)-2-j}N^{m}f^{(m)}(0)\right) .
\end{equation*}%
Noting that $N^{q}=0,$ we see that it equals $x$ by (\ref%
{Distributionsolution}).
\end{proof}

We intend to weaken the higher differentiability requirement for $f\in 
\mathcal{C}^{q+k}({\mathbb{R}}_{+},{\mathbb{R}}^{n})$ in Lemma \ref{LemmaA}.
Again, we note that $f$ is always assumed in $\mathcal{C}^{q-1}({\mathbb{R}}%
_{+},{\mathbb{R}}^{n}).$

\begin{lemma}
\label{LemmaA'}Suppose $f\in \mathcal{L}^{1}({\mathbb{R}}_{+},{\mathbb{R}}%
^{n}).$ If the number sequence $\{\tint\nolimits_{0}^{+\infty
}||N_{i}^{k}e^{N_{i}^{-1}t}||dt,$ $i=1,2,\ldots \}$ is bounded for some $%
k\geq 0,$ then $\{x_{i}\}_{i=1}^{\infty }$ converges.
\end{lemma}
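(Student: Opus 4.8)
The plan is to show that it suffices to prove $\lim_{i\to\infty}N_i^{q}x_i=0$ in $\mathcal{C}_C^{\infty}(\mathbb{R},\mathbb{R}^n)'$, and then to obtain this limit by a descending induction that replaces the single high-order distributional derivative $\mathcal{D}_{\mathrm{d}}^{(q+1+k)}$ used in Lemma \ref{LemmaA} with $1+k$ applications of $\mathcal{D}_{\mathrm{d}}^{(1)}$. For the reduction: since $x_i\in\mathcal{C}^{q}({\mathbb{R}}_{+},{\mathbb{R}}^n)$ by Lemma \ref{Lemmadaoshu} and $f\in\mathcal{C}^{q-1}({\mathbb{R}}_{+},{\mathbb{R}}^n)$, Lemma \ref{kjiedaoshu} is available with $k=q$; multiplying by $N_i^{q}$ gives, exactly as in (\ref{jiedekejiedaoshu}) (and using $N_i^{q}\mathcal{D}_{\mathrm{d}}^{(q)}x_i=\mathcal{D}_{\mathrm{d}}^{(q)}(N_i^{q}x_i)$),
\begin{equation*}
x_i=\mathcal{D}_{\mathrm{d}}^{(q)}(N_i^{q}x_i)-\sum_{l=0}^{q-1}N_i^{l}f^{(l)}-\sum_{j=0}^{q-1}\delta^{(j)}N_i^{j+1}\left(x_0+\sum_{m=0}^{q-2-j}N_i^{m}f^{(m)}(0)\right).
\end{equation*}
Every explicit term on the right converges as $i\to\infty$ because $N_i\to N$ (fixed continuous functions, resp.\ fixed distributions, multiplied by convergent matrices). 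Hence, by Lemma \ref{daoshushoulian}, once $N_i^{q}x_i\to 0$ is known, $\{x_i\}$ converges, and its limit is $x$ by the same cancellations ($N^q=0$ and $N^{j+1}\sum_{m=q-1-j}^{q-1}N^{m}=0$) carried out in the lines after (\ref{jiedekejiedaoshu}).

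The base of the induction is $N_i^{q+1+k}x_i\to 0$ in $\mathcal{C}_C^{\infty}(\mathbb{R},\mathbb{R}^n)'$. This is the first part of the proof of Lemma \ref{LemmaA}, and it uses only what is available here: Lemma 1 of \cite{Cobb1982} together with the boundedness hypothesis handles the natural-response term $N_i^{q+1+k}e^{N_i^{-1}t}x_0$, while the estimate displayed there for the forced-response term uses solely $f\in\mathcal{L}^1({\mathbb{R}}_{+},{\mathbb{R}}^n)$, the boundedness hypothesis, and $||N_i^{q}||\to||N^q||=0$. The extra smoothness $f\in\mathcal{C}^{q+k}$ is never invoked at this stage, so this step carries over unchanged.

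For the inductive step I claim: if $N_i^{q+m}x_i\to 0$ for some $m$ with $1\leq m\leq 1+k$, then $N_i^{q+m-1}x_i\to 0$. Indeed $N_i^{q+m}x_i\in\mathcal{C}^{q}\subseteq\mathcal{C}^{1}$ with value $N_i^{q+m}x_0$ at $t=0$, so Lemma \ref{Lemma1} (with $k=1$) together with the identity $x_i^{(1)}=N_i^{-1}x_i+N_i^{-1}f$ from Lemma \ref{Lemmadaoshu} yields
\begin{equation*}
\mathcal{D}_{\mathrm{d}}^{(1)}(N_i^{q+m}x_i)=N_i^{q+m-1}x_i+N_i^{q+m-1}f+\delta\,N_i^{q+m}x_0,
\end{equation*}
hence $N_i^{q+m-1}x_i=\mathcal{D}_{\mathrm{d}}^{(1)}(N_i^{q+m}x_i)-N_i^{q+m-1}f-\delta\,N_i^{q+m}x_0$. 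Letting $i\to\infty$: the first term tends to $0$ by the induction hypothesis and Lemma \ref{daoshushoulian}, and the last two tend to $N^{q+m-1}f$ and $\delta\,N^{q+m}x_0$, both of which vanish because $q+m-1\geq q$ and $q+m\geq q$. Running this from $m=1+k$ down to $m=1$ gives $N_i^{q}x_i\to 0$, and the reduction above then finishes the proof.

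I expect the only real obstacle to be a careful check of two points: first, that the base case genuinely does not smuggle in any differentiability of $f$ beyond integrability (it does not, since the estimate in Lemma \ref{LemmaA} only bounds an $\mathcal{L}^1$-norm of $f$); and second, the reason the induction must stop at $N_i^{q}x_i$ rather than continue all the way to $x_i$ — one further step would produce the leftover forcing term $N^{q-1}f$, which need not vanish, whereas $N^{q}f=0$. This is also precisely why the tail of (\ref{jiedekejiedaoshu}) is fed back in at level $q$ instead of pushing the peeling lower. Everything else is routine distributional algebra, comfortably supported by $x_i\in\mathcal{C}^{q}$.
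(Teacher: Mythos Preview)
Your argument is correct. The base case borrows exactly the estimate from the proof of Lemma~\ref{LemmaA}, which indeed uses only $f\in\mathcal{L}^{1}$; each peeling step requires just $x_i\in\mathcal{C}^{1}$ and the first-order identity $x_i^{(1)}=N_i^{-1}x_i+N_i^{-1}f$, so no smoothness of $f$ beyond continuity is invoked; and the reduction via (\ref{jiedekejiedaoshu}) is legitimate because $N_i^{q}\mathcal{D}_{\mathrm{d}}^{(q)}x_i=\mathcal{D}_{\mathrm{d}}^{(q)}(N_i^{q}x_i)$.

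The paper's own proof takes a slightly different route. Rather than peeling one power of $N_i$ at a time, it treats only $k=0$ explicitly (waving the general case aside as a ``slight modification'') and applies a single distributional derivative $\mathcal{D}_{\mathrm{d}}$ to the whole identity (\ref{jiedekejiedaoshu}). This produces one problematic term $N_i^{q-1}\mathcal{D}_{\mathrm{d}}f^{(q-1)}$, which after multiplying through by $N_i$ becomes $N_i^{q}\mathcal{D}_{\mathrm{d}}f^{(q-1)}\to N^{q}\mathcal{D}_{\mathrm{d}}f^{(q-1)}=0$; all other terms are handled as in Lemma~\ref{LemmaA}. Your descending induction is cleaner in two respects: it treats all $k\ge 0$ uniformly with no extra work, and it isolates the mechanism (each step needs only $x_i\in\mathcal{C}^{1}$) more transparently. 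The paper's version, on the other hand, stays closer to the template of Lemma~\ref{LemmaA} and makes visible exactly which single term ($\mathcal{D}_{\mathrm{d}}f^{(q-1)}$) would have obstructed a direct application of that lemma.
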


\begin{proof}
We only prove the result in\ the case $k=0.$ That for $k\geq 1$ can be
proved by some slight modification. Note that $f\in \mathcal{C}^{q-1}({%
\mathbb{R}}_{+},{\mathbb{R}}^{n})$ but maybe $f\notin $ $\mathcal{C}^{q+0}({%
\mathbb{R}}_{+},{\mathbb{R}}^{n})$ $=\mathcal{C}^{q}({\mathbb{R}}_{+},{%
\mathbb{R}}^{n}).$

Differentiating two sides of (\ref{jiedekejiedaoshu}) gives%
\begin{equation}
N_{i}^{q}\mathcal{D}_{\mathrm{d}}^{(q+1)}x_{i}=\mathcal{D}_{\mathrm{d}%
}x_{i}+\tsum\nolimits_{l=0}^{q-1}N_{i}^{l}\mathcal{D}_{\mathrm{d}%
}f^{(l)}+\tsum\nolimits_{j=0}^{q-1}\delta ^{(j+1)}N_{i}^{j+1}\left(
x_{0}+\tsum\nolimits_{m=0}^{q-2-j}N_{i}^{m}f^{(m)}(0)\right) .
\label{qjia1jiedaoshu}
\end{equation}%
Noting that $x_{i}\in \mathcal{C}^{1}({\mathbb{R}}_{+},{\mathbb{R}}^{n})$
and $f^{(l)}\in \mathcal{C}^{1}({\mathbb{R}}_{+},{\mathbb{R}}^{n})$ for $%
l=0,1,\ldots ,q-2,$ it follows from Lemma \ref{Lemma1} that 
\begin{eqnarray*}
\mathcal{D}_{\mathrm{d}}x_{i} &=&\dot{x}_{i}+\delta \cdot x_{0} \\
&=&N_{i}^{-1}x_{i}+N_{i}^{-1}f+\delta \cdot x_{0},
\end{eqnarray*}%
and%
\begin{equation*}
\mathcal{D}_{\mathrm{d}}f^{(l)}=f^{(l+1)}+\delta \cdot f^{(l)}(0)
\end{equation*}%
for $l=0,1,\ldots ,q-2.$ Substituting in (\ref{qjia1jiedaoshu}) gives 
\begin{eqnarray*}
N_{i}^{q}\mathcal{D}_{\mathrm{d}}^{(q+1)}x_{i}
&=&N_{i}^{-1}x_{i}+N_{i}^{-1}f+\delta \cdot x_{0} \\
&&+\tsum\nolimits_{l=0}^{q-2}N_{i}^{l}f^{(l+1)}+\delta \cdot
\tsum\nolimits_{l=0}^{q-2}N_{i}^{l}f^{(l)}(0)+N_{i}^{q-1}\mathcal{D}_{%
\mathrm{d}}f^{(q-1)} \\
&&+\tsum\nolimits_{j=0}^{q-1}\delta ^{(j+1)}N_{i}^{j+1}\left(
x_{0}+\tsum\nolimits_{m=0}^{q-2-j}N_{i}^{m}f^{(m)}(0)\right) \\
&=&N_{i}^{-1}x_{i}+N_{i}^{-1}\tsum%
\nolimits_{l=-1}^{q-2}N_{i}^{l+1}f^{(l+1)}+N_{i}^{q-1}\mathcal{D}_{\mathrm{d}%
}f^{(q-1)} \\
&&+N_{i}^{-1}\tsum\nolimits_{j=-1}^{q-1}\delta ^{(j+1)}N_{i}^{j+1+1}\left(
x_{0}+\tsum\nolimits_{m=0}^{q-2-j}N_{i}^{m}f^{(m)}(0)\right) -\delta \cdot
N_{i}^{q-1}f^{(q-1)}(0).
\end{eqnarray*}%
Then we have%
\begin{eqnarray*}
N_{i}^{q+1}\mathcal{D}_{\mathrm{d}}^{(q+1)}x_{i}
&=&x_{i}+\tsum\nolimits_{l=-1}^{q-2}N_{i}^{l+1}f^{(l+1)}+N_{i}^{q}\mathcal{D}%
_{\mathrm{d}}f^{(q-1)} \\
&&+\tsum\nolimits_{j=-1}^{q-1}\delta ^{(j+1)}N_{i}^{j+1+1}\left(
x_{0}+\tsum\nolimits_{m=0}^{q-2-j}N_{i}^{m}f^{(m)}(0)\right)
-N_{i}^{q}f^{(q-1)}(0). \\
&&.
\end{eqnarray*}%
Noting that 
\begin{equation*}
\lim_{i\rightarrow \infty }N_{i}^{q}\mathcal{D}_{\mathrm{d}}f^{(q-1)}=N^{q}%
\mathcal{D}_{\mathrm{d}}f^{(q-1)}=0,
\end{equation*}%
the remainder thing is similar to the proof of Lemma \ref{LemmaA}.
\end{proof}

We need to weaken the integrability requirement $f\in \mathcal{L}^{1}({%
\mathbb{R}}_{+},{\mathbb{R}}^{n}).$

\begin{lemma}
\label{LemmaC}For any $b>0,$ there exists $f_{b}\in \mathcal{C}^{q-1}({%
\mathbb{R}}_{+},{\mathbb{R}}^{n})$ $\cap \mathcal{L}^{1}({\mathbb{R}}_{+},{%
\mathbb{R}}^{n})$ such that%
\begin{equation}
f_{b}(t)=f(t),\ \forall t\leq b.  \label{fb}
\end{equation}
\end{lemma}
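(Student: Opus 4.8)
The plan is to construct $f_b$ explicitly by modifying $f$ only on the region $t > b$, keeping $f$ unchanged on $[0,b]$ as required by~(\ref{fb}), and arranging that the modified function both retains $\mathcal{C}^{q-1}$ regularity and becomes Lebesgue integrable on ${\mathbb{R}}_+$. The simplest device is a smooth cutoff: let $\varphi\in\mathcal{C}^\infty({\mathbb{R}}_+,{\mathbb{R}})$ be a fixed bump with $\varphi(t)=1$ for $t\le b$ and $\varphi(t)=0$ for $t\ge b+1$ (such $\varphi$ exists by standard partition-of-unity constructions), and set $f_b(t)=\varphi(t)f(t)$. Then $f_b$ agrees with $f$ on $[0,b]$, it is supported in $[0,b+1]$, hence $f_b\in\mathcal{L}^1({\mathbb{R}}_+,{\mathbb{R}}^n)$ because a continuous function on the compact set $[0,b+1]$ is bounded and the support has finite measure.

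The key steps, in order, are: first, fix the cutoff $\varphi$ and record its properties; second, define $f_b=\varphi f$ and verify~(\ref{fb}) directly since $\varphi\equiv 1$ on $[0,b]$; third, check $f_b\in\mathcal{C}^{q-1}({\mathbb{R}}_+,{\mathbb{R}}^n)$ by the Leibniz rule — each derivative $f_b^{(j)}$ for $j\le q-1$ is a finite sum $\sum_{l=0}^{j}\binom{j}{l}\varphi^{(j-l)}f^{(l)}$, which is continuous because $\varphi$ is $\mathcal{C}^\infty$ and $f\in\mathcal{C}^{q-1}$; fourth, conclude integrability from compact support and continuity. None of these steps involves a genuine obstacle; the only point requiring a word of care is that the compact support is on the \emph{half-line}, so the relevant compact set is $[0,b+1]$ rather than a two-sided interval, but this changes nothing in the argument.

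The main (and minor) obstacle is purely one of justifying that a smooth cutoff with the stated plateau and vanishing behavior exists on ${\mathbb{R}}_+$; this is classical (e.g.\ build it from the standard mollifier $e^{-1/t}$-type function), and since the paper works in distribution theory where such bump functions are ubiquitous, one may simply cite their existence. An alternative, if one wishes to avoid invoking smooth cutoffs, is to extend $f$ past $t=b$ by a polynomial of degree $q-1$ matching $f,f',\ldots,f^{(q-1)}$ at $t=b$ on an interval $[b,b+1]$, then continue by $0$ — but the Leibniz-rule argument with a $\mathcal{C}^\infty$ cutoff is cleaner and I would present that.
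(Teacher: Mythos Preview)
Your main argument via a smooth cutoff $\varphi$ is correct and complete. It differs from the paper's construction, which avoids bump functions altogether: the paper instead builds a polynomial $P$ of degree $2q-1$ satisfying the $2q$ Hermite interpolation conditions $P^{(k)}(b)=f^{(k)}(b)$ and $P^{(k)}(b+1)=0$ for $k=0,\ldots,q-1$, and defines $f_b$ piecewise as $f$ on $[0,b]$, $P$ on $(b,b+1]$, and $0$ beyond. Your approach is more in the spirit of distribution theory and needs no interpolation machinery, while the paper's is entirely elementary and self-contained, requiring no appeal to the existence of smooth plateau functions.

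One small correction to your closing remark: the alternative you sketch --- a polynomial of degree $q-1$ matching $f,f',\ldots,f^{(q-1)}$ at $t=b$, then continuing by $0$ --- does not work as stated. Such a polynomial is already uniquely determined by the $q$ conditions at $b$ and will not in general vanish to order $q-1$ at $b+1$, so the junction with $0$ there would fail to be $\mathcal{C}^{q-1}$. The paper's version uses degree $2q-1$ precisely to accommodate the additional $q$ vanishing conditions at $b+1$.
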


\begin{proof}
One can construct a (unique) polynomial $P(t)$\ of degree $(2q-1)$ such that%
\begin{equation*}
P^{(k)}(b)=f^{(k)}(b),\ P^{(k)}(b+1)=0
\end{equation*}%
for $k=0,1,\ldots ,q-1$ (see \cite[p. 88]{HeXuChu1980}). Then we define%
\begin{equation*}
f_{b}(t)=\left\{ 
\begin{array}{ll}
f(t), & \text{if }0\leq t\leq b, \\ 
P(t), & \text{if }b<t\leq b+1, \\ 
0, & \text{if }t>b+1,%
\end{array}%
\right.
\end{equation*}%
which satisfies the requirement.
\end{proof}

\begin{theorem}
\label{ThTwo}If the number sequence $\{\tint\nolimits_{0}^{+\infty
}||N_{i}^{k}e^{N_{i}^{-1}t}||dt,$ $i=1,2,\ldots \}$ is bounded for some $%
k\geq 0,$ then $\{x_{i}\}_{i=1}^{\infty }$ converges.
\end{theorem}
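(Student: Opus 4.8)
The plan is to deduce Theorem \ref{ThTwo} from Lemma \ref{LemmaA'} by a causality argument, using Lemma \ref{LemmaC} to trade the possibly non-integrable $f$ for a compactly supported modification that coincides with $f$ on an arbitrarily long initial segment. Fix an arbitrary test function $\lambda \in \mathcal{C}_{C}^{\infty}(\mathbb{R},\mathbb{R}^{n})$ and choose $b>0$ with $\mathrm{supp}\,\lambda \subset (-b,b)$. By Lemma \ref{LemmaC} there is $f_{b}\in \mathcal{C}^{q-1}(\mathbb{R}_{+},\mathbb{R}^{n})\cap \mathcal{L}^{1}(\mathbb{R}_{+},\mathbb{R}^{n})$ with $f_{b}=f$ on $[0,b]$; let $x_{i}^{b}$ be the solution of (\ref{disPertur}) with $f$ replaced by $f_{b}$, i.e.\ the function (\ref{discretsolution}) with $f_{b}$ in place of $f$. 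In that formula the value $x_{i}(t)$ for $t\in [0,b]$ involves only $f(\tau)$ with $\tau \in [0,t]\subseteq [0,b]$, hence $x_{i}^{b}(t)=x_{i}(t)$ for all $t\in [0,b]$. Since $\lambda$ vanishes outside $(-b,b)$ and both $x_{i}$ and $x_{i}^{b}$ vanish on $(-\infty,0)$ under the natural embedding into $\mathcal{L}_{\mathrm{loc}}(\mathbb{R},\mathbb{R}^{n})$, this gives $\langle x_{i},\,\lambda \rangle =\langle x_{i}^{b},\,\lambda \rangle$ for every $i$.

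Next I would apply Lemma \ref{LemmaA'} to $\{x_{i}^{b}\}$: the boundedness hypothesis on the $N_{i}$ is untouched, and now $f_{b}\in \mathcal{L}^{1}(\mathbb{R}_{+},\mathbb{R}^{n})$, so $\{x_{i}^{b}\}$ converges; moreover, as recorded in the proof of that lemma (equivalently, by the uniqueness theorem of Section 3), its limit $x^{b}$ is the distribution obtained from (\ref{Distributionsolution}) with $f$ replaced by $f_{b}$. Because $f_{b}$ and its derivatives up to order $q-1$ agree with those of $f$ on $[0,b]$, in particular at $t=0$, the $\delta$-terms of $x^{b}$ coincide with those of $x$ and the regular parts of $x^{b}$ and $x$ differ only on $[b,+\infty)$; hence $\langle x^{b},\,\lambda \rangle =\langle x,\,\lambda \rangle$ for the distribution $x$ of (\ref{Distributionsolution}). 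Chaining the equalities, $\lim_{i\to \infty }\langle x_{i},\,\lambda \rangle =\lim_{i\to \infty }\langle x_{i}^{b},\,\lambda \rangle =\langle x^{b},\,\lambda \rangle =\langle x,\,\lambda \rangle$, and since $\lambda$ was arbitrary, $\lim_{i\to \infty }x_{i}=x$ in $\mathcal{C}_{C}^{\infty}(\mathbb{R},\mathbb{R}^{n})^{\prime}$; in particular $\{x_{i}\}_{i=1}^{\infty}$ converges.

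I expect the main obstacle to be the careful bookkeeping in the localization step rather than any deep difficulty: one must justify that $\langle x_{i},\,\lambda \rangle$ depends only on the restriction of $x_{i}$ to $[0,b)$, that this restriction depends only on $f|_{[0,b]}$, and that the very same replacement $f\mapsto f_{b}$ leaves $\langle x,\,\lambda \rangle$ unchanged; once these three points are in place the theorem follows at once from Lemmas \ref{LemmaA'} and \ref{LemmaC} together with the uniqueness result. A minor point worth stating explicitly is that $b$ may depend on $\lambda$, which is harmless since convergence of distributions is tested one test function at a time.
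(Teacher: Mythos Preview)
Your proposal is correct and follows essentially the same route as the paper's own proof: pick a test function, choose $b$ so the test function vanishes for $t\geq b$, replace $f$ by the compactly modified $f_{b}$ from Lemma~\ref{LemmaC}, apply Lemma~\ref{LemmaA'} to the resulting solutions, and use causality to match the pairings. Your write-up even spells out the ``direct computation'' the paper leaves implicit, namely that $x_{i}$ and $x_{i}^{b}$ agree on $[0,b]$ via the variation-of-constants formula and that $x$ and $x^{b}$ share both their $\delta$-terms (since $f_{b}^{(j)}(0)=f^{(j)}(0)$) and their regular parts on $[0,b]$.
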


\begin{proof}
Arbitrarily choose $h\in \mathcal{C}_{C}^{\infty }(\mathbb{R},\mathbb{R}%
^{n}).$ Then we have%
\begin{equation*}
h(t)=0,\ \forall t\geq b
\end{equation*}%
for some $b>0.$ Let $f_{b}\in \mathcal{C}^{q-1}({\mathbb{R}}_{+},{\mathbb{R}}%
^{n})\cap \mathcal{L}^{1}({\mathbb{R}}_{+},{\mathbb{R}}^{n})$ with (\ref{fb}%
). Then by Lemma \ref{LemmaA'}, the sequence%
\begin{equation*}
y_{i}(t)=e^{N_{i}^{-1}t}x_{0}+\tint\nolimits_{0}^{t}e^{N_{i}^{-1}(t-\tau
)}N_{i}^{-1}f_{b}(\tau )d\tau ,\text{ }t\geq 0
\end{equation*}%
converges to%
\begin{equation*}
y(t)=-\tsum\nolimits_{i=0}^{q-1}N^{i}f_{b}^{(i)}(t)-\tsum%
\nolimits_{k=1}^{q-1}\delta ^{(k-1)}(t)N^{k}\left\{
x_{0}+\tsum\nolimits_{i=0}^{q-1}N^{i}f_{b}^{(i)}(0)\right\} ,\text{ }t\geq 0
\end{equation*}%
in the sense of Definition \ref{DefOne}.\ By direct computation we can get%
\begin{equation*}
\left\langle x_{i},\ h\right\rangle =\left\langle y_{i},\ h\right\rangle ,%
\text{ }i=1,2,\ldots
\end{equation*}%
and%
\begin{equation*}
\left\langle x,\ h\right\rangle =\left\langle y,\ h\right\rangle .
\end{equation*}%
Therefore $\lim_{i\rightarrow \infty }\left\langle x_{i},\ h\right\rangle
=\left\langle x,\ h\right\rangle ,$ and this completes the proof.
\end{proof}

\begin{example}
\label{LemmaB}Set $N_{i}=N-\frac{1}{i}I,$ $i=1,2,\ldots $. Then $%
\{\tint\nolimits_{0}^{+\infty }||N_{i}^{k}e^{N_{i}^{-1}t}||dt,$ $%
i=1,2,\ldots \}$ is bounded for some $k\geq 0$ (see Lemma 2 in \cite%
{Cobb1982}). So according to this perturbation manner, Theorem \ref{ThTwo}
guarantees that the solution sequence $\{x_{i}\}_{i=1}^{\infty }$ of the
perturbed systems (\ref{disPertur}) converges to the solution $x$ of the
singular system (\ref{eqdiffer}).
\end{example}

\section{Conclusions}

As an idealized model, the nonhomogeneous singular system can approximate
some singularly perturbed systems well in a sense of distribution theory. A
future work is to give some condition easy to verify on perturbations to
guarantee convergence.

\end{document}